\def\l{\left}
\def\r{\right}
\def\bg{\bigg}
\def\({\bg(}
\def\){\bg)}
\def\t{\text}
\def\f{\frac}
\def\ls{\leqslant}
\def\gs{\geqslant}
\def\se {\subseteq}
\def\al{\alpha}
\def\eq{\equiv}
\def\la{\lambda}
\def\Z{\mathbb Z}
\def\N{\mathbb N}
\def\1{{\bf 1}}
\def\pmod #1{\ ({\rm{mod}}\ #1)}
\def\<{\langle}
\def\>{\rangle}
\theoremstyle{plain}
\newtheorem{theorem}{Theorem}[section]
\newtheorem{lemma}{Lemma}
\newtheorem{corollary}{Corollary}
\newtheorem{conjecture}{Conjecture}
\theoremstyle{definition}
\theoremstyle{remark}
\begin{document}
\hbox{Preprint, {\tt arXiv:2407.04642}}
\medskip

\title[Some determinants involving binary forms]
{Some determinants involving binary forms}
\author{Yue-Feng She}
\address {(Yue-Feng She) Department of Applied Mathematics, Nanjing Forestry
University, Nanjing 210037, People's Republic of China}
\email{she.math@njfu.edu.cn}

\author{Zhi-Wei Sun}
\address {(Zhi-Wei Sun, corresponding author) Department of Mathematics, Nanjing
University, Nanjing 210093, People's Republic of China}
\email{zwsun@nju.edu.cn}

\keywords{Determinants, Legendre symbols, Jacobi symbols, Euler's totient function, polynomials over finite fields.
\newline \indent 2020 {\it Mathematics Subject Classification}. Primary 11C20, 11T06; Secondary 15A15.
\newline \indent The second author is supported by the Natural Science Foundation of China (grant no. 12371004).}
\begin{abstract}
In this paper, we study arithmetic properties of certain determinants involving powers of $i^2+cij+dj^2$, where $c$ and $d$ are integers. For example,  for any odd integer $n>1$ with $(\frac dn)=-1$ we prove that $\det[(\frac{i^2+cij+dj^2}{n})]_{0\ls i,j\ls n-1}$ is divisible by $\varphi(n)^2$, where $(\frac{\cdot}{n})$ is the Jacobi symbol and $\varphi$ is Euler's totient function. This confirms a previous conjecture of Sun.
\end{abstract}
\maketitle

\section{Introduction}
\setcounter{lemma}{0}
\setcounter{theorem}{0}
\setcounter{equation}{0}
\setcounter{conjecture}{0}
\setcounter{remark}{0}
\setcounter{corollary}{0}

For each $n\times n$ matrix $M=[a_{ij}]_{1\ls i,j\ls n}$ over a commutative ring, we denote its determinant by $\det (M)$ or $\det [a_{ij}]_{1\ls i,j\ls n}$.
If $a_{ij}=0$ for all $1\ls i,j\ls n$ with $i\not=j$, then we simply write $M=[a_{ij}]_{1\ls i,j\ls n}$ as $\mathrm{diag}(a_{11},\ldots,a_{nn})$.
For various results over evaluations of determinants, one may consult the excellent survey papers \cite{K1,K2}. In this paper we study some determinants involving certain binary forms and related Jacobi symbols.

Let $a$ be any integer. For any odd prime $p$, the Legendre symbol $(\f ap)$ is given by
$$\l(\f ap\r)=\begin{cases}1&\t{if}\ p\nmid a\ \t{and}\ x^2\eq a\pmod p\ \t{for some}\ x\in\Z,
\\-1&\t{if}\ p\nmid a\ \t{and}\ x^2\eq a\pmod p\ \t{for no}\ x\in\Z,
\\0&\t{if}\ p\mid a.\end{cases}
$$
For any positive odd integer $n$, the Jacobi symbol $(\f an)$ is defined as follows:
$$\l(\f an\r)=\begin{cases}1&\t{if}\ n=1,
\\\prod_{i=1}^k(\f a{p_i})&\t{if}\ n=p_1\cdots p_k\ \t{for some primes}\ p_1,\ldots,p_k.
\end{cases}$$

Let $c,d\in\Z$. For any odd number $n>1$, Sun \cite{Sun19} introduced
$$
(c,d)_n:= \det \left[ \left(\f{i^2+cij+dj^2}{n}\right)\right]_{1\ls i,j\ls n-1}
$$
and
$$
[c,d]_n:= \det \left[ \left(\f{i^2+cij+dj^2}{n}\right)\right]_{0\ls i,j\ls n-1}.
$$
By \cite[Theorem 1.3]{Sun19}, $(c,d)_n=0$ if $(\f dn)=-1$, and $[c,d]_p$ is divisible by $p-1$
if $p$ is an odd prime with $(\f dp)=1$. For some results on $(c,d)_n$ and $[c,d]_n$ with $c$ and $d$ special, one may consult Krachun et al. \cite{KPSV}. For an odd prime $p$, the values of $(c,d)_p$ and $[c,d]_p$ are sometimes related to elliptic curves over the finite field $\mathbb F_p=\Z/p\Z$ (cf. \cite{KPSV,W}).

 In Section 2, we will prove the following result, which was first conjectured by Sun \cite[Conjecture 11.35]{Sconj}.

\begin{theorem}\label{cdp}
Let $c,d\in\Z$. For any odd number $n>1$ with $(\f{d}{n})=-1$, we have $\varphi(n)^2\mid [c,d]_n$, where $\varphi$ is Euler's totient function.
\end{theorem}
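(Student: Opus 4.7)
My plan is to combine the vanishing result $(c,d)_n=0$ (which holds when $(\f dn)=-1$, by \cite[Theorem~1.3]{Sun19}) with a bordered-determinant expansion and a character argument driven by the diagonal multiplicative action of $U_n:=\{1\ls i\ls n-1:\gcd(i,n)=1\}$ on the $(n-1)\times(n-1)$ matrix
$$A:=\l[\l(\tfrac{i^2+cij+dj^2}n\r)\r]_{1\ls i,j\ls n-1},$$
for which $\det A=0$. In the full $n\times n$ matrix $[(\f{i^2+cij+dj^2}n)]_{0\ls i,j\ls n-1}$ the $0$-th row is $(0,w^T)$ with $w_j=(\f{dj^2}n)=-[j\in U_n]$, the $0$-th column is $(0,z^T)^T$ with $z_i=(\f{i^2}n)=[i\in U_n]$, and the $(0,0)$-entry is $0$. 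The bordered-determinant identity
$$\det\begin{pmatrix}0 & w^T\\ z & D\end{pmatrix}=-w^T\,\t{adj}(D)\,z$$
(the $a=0$ case of $\det\bigl(\begin{smallmatrix}a&w^T\\z&D\end{smallmatrix}\bigr)=a\det D-w^T\t{adj}(D)z$, proved via $D^{-1}\det D=\t{adj}(D)$ and continuity) then gives
\begin{equation}\label{eq:key}
[c,d]_n=\sum_{i,j\in U_n}\t{adj}(A)_{i,j}.
\end{equation}

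Next I exploit the diagonal action. For each $t\in U_n$, $(ti)^2+c(ti)(tj)+d(tj)^2=t^2(i^2+cij+dj^2)$ together with $(\f{t^2}n)=1$ yields $A_{ti,tj}=A_{i,j}$; equivalently $P_tAP_t^{-1}=A$ for the permutation matrix $P_t$ of $i\mapsto ti\bmod n$, whence $\t{adj}(A)_{ti,tj}=\t{adj}(A)_{i,j}$. Since $\det A=0$, the matrix $\t{adj}(A)$ has rank at most $1$; if it is zero, \eqref{eq:key} already gives $[c,d]_n=0$, so assume $\t{adj}(A)=uv^T$ with $u,v\ne 0$. Uniqueness of the rank-$1$ factorisation applied to $P_t(uv^T)P_t^{-1}=uv^T$ produces a character $\lambda\colon U_n\to\C^{\times}$ with $P_tu=\lambda(t)u$ and $P_tv=\lambda(t)^{-1}v$, equivalently $u_t=\lambda(t)^{-1}u_1$ and $v_t=\lambda(t)v_1$ for each $t\in U_n$. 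Character orthogonality on $U_n$ then gives
$$\sum_{i\in U_n}u_i=\varphi(n)\,u_1\,[\lambda=\1],\qquad\sum_{j\in U_n}v_j=\varphi(n)\,v_1\,[\lambda=\1],$$
which, substituted into \eqref{eq:key}, yields
$$[c,d]_n=\varphi(n)^2\,u_1v_1\,[\lambda=\1]=\varphi(n)^2\,\t{adj}(A)_{1,1}\,[\lambda=\1].$$
Since $\t{adj}(A)_{1,1}$ is a cofactor of the integer matrix $A$, it lies in $\Z$, so $\varphi(n)^2\mid[c,d]_n$.

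No additional number theory is required; the only place where care is needed is the subcase $u_1v_1=\t{adj}(A)_{1,1}=0$ with $\t{adj}(A)\ne 0$. There the character identities $u_t=\lambda(t)^{-1}u_1$ and $v_t=\lambda(t)v_1$ force either $u$ or $v$ to vanish on all of $U_n$, so \eqref{eq:key} still equals $0$; but because $u$ and $v$ are naturally defined on the larger set $\{1,\cs,n-1\}$ (not only on $U_n$), the off-$U_n$ coordinates must be tracked carefully when invoking uniqueness of the rank-$1$ factorisation. I expect this bookkeeping, rather than the underlying idea, to be the main technical nuisance of the proof; the conceptual core is simply Sun's vanishing theorem combined with the diagonal $U_n$-symmetry.
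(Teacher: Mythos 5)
Your proof is correct, and it takes a genuinely different route from the paper's. The paper never invokes the vanishing of $(c,d)_n$: it first disposes of non-squarefree $n$ (two columns of the matrix coincide, so $[c,d]_n=0$), and then, for squarefree $n$, evaluates the complete sums $\sum_{(i,n)=p(A)}\bigl(\frac{i^2+cij+dj^2}{n}\bigr)$ via the Chinese Remainder Theorem and assembles a M\"obius-weighted \emph{integer} row operation (and the analogous column operation) that turns the $1$-row into $(\varphi(n),0,\ldots,0)$ and the $1$-column into $(-\varphi(n),0,\ldots,0)^T$, after which the divisibility by $\varphi(n)^2$ is read off directly. You instead take Sun's theorem $(c,d)_n=0$ as a black box, convert the bordered determinant into $[c,d]_n=\sum_{i,j\in U_n}\mathrm{adj}(A)_{i,j}$, and exploit the rank-$\leqslant 1$ structure of the adjugate of a singular matrix together with the symmetry $P_tAP_t^{-1}=A$ and character orthogonality on $U_n$. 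Your argument is shorter, treats squarefree and non-squarefree $n$ uniformly, and isolates the group-theoretic source of the factor $\varphi(n)^2$; the cost is the reliance on the external vanishing theorem, whose content the paper's explicit character-sum computation essentially reproves along the way. Two small remarks: the caveat you raise about off-$U_n$ coordinates is vacuous, since $U_n$ is a single orbit of the multiplication action containing the index $1$ and the eigenvector relations $P_tu=\lambda(t)u$, $P_tv=\lambda(t)^{-1}v$ hold for the full vectors, so $u_t=\lambda(t)^{-1}u_1$ and $v_t=\lambda(t)v_1$ on $U_n$ follow at once; and the degenerate subcase $u_1v_1=0$ needs no separate discussion, being already contained in the identity $[c,d]_n=\varphi(n)^2\,\mathrm{adj}(A)_{1,1}\,[\lambda=\mathbf{1}]$ with $\mathrm{adj}(A)_{1,1}\in\Z$.
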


Let $c$ and $d$ be integers. By \cite[Theorem 1.2]{Sun24}, for any prime $p>3$ and $n\in\{(p+1)/2,\ldots,p-2\}$, we have
$$\det[(i^2+cij+dj^2)^n]_{0\ls i,j\ls p-1}\eq0\pmod p.$$
By \cite[Theorem 1.1]{WS}, for any odd prime $p$ with $(\f dp)=-1$ we have
$$\det[(i^2+cij+dj^2)^n]_{1\ls i,j\ls p-1}\eq0\pmod p$$
for all $n=1,\ldots,p-1$.

Suppose that $P(x,y)\in\Z[x,y]$ and its degree with respect to $x$ is smaller than $n\in\N$. For each $j=1,\ldots,n$, write
$$P(x,j)=\sum_{k=1}^n a_{jk}x^{k-1}$$
with $a_{j1},\ldots,a_{jn}\in\Z$. By \cite[Lemma 15]{K2}, we have
\begin{align*}\det[P(i,j)]_{1\ls i,j\ls n}&=\lim_{t\to0}\det[ta_{j1}(-i)^n+P(i,j)]_{1\ls i,j\ls n}
\\&=\lim_{t\to0}(1-n!t)\prod_{1\ls i<j\ls n}(j-i)\times\det[a_{jk}]_{1\ls j,k\ls n}
\\&=1!2!\cdots (n-1)!\times\det[a_{jk}]_{1\ls j,k\ls n}.
\end{align*}
In particular, if the degree of $P(x,y)$ with respect to $x$ is smaller than $n-1$, then $a_{1n}=\ldots=a_{nn}=0$ and hence
$$\det[P(i,j)]_{1\ls i,j\ls n}=1!2!\cdots (n-1)!\times\det[a_{jk}]_{1\ls j,k\ls n}=0.$$

We will establish the following result in Section 3.

\begin{theorem}\label{shift}
Let $p$ be an odd prime, and let
$$
H(X,Y)=\sum_{k=0}^{n}a_kX^kY^{n-k}
$$
with $a_0,\ldots,a_n\in\Z$.

{\rm (i)} If $n=p-1$, then
$$\det[x+H(i,j)]_{1\ls i,j\ls p-1}\equiv(x+a_0+a_{p-1})\prod_{k=1}^{p-2}a_k\pmod{p}.$$

{\rm (ii)} If $n=p-2$ or $p-1<n<2p-2$, then
$$
\det[x+H(i,j)]_{1\ls i,j\ls p-1}\equiv(-1)^n\prod_{k=0}^{p-2}\sum_{0\ls j\ls n\atop p-1\mid j-k}a_j\pmod{p}.
$$
\end{theorem}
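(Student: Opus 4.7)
The plan is to reduce the matrix modulo $p$ via Fermat's little theorem and factor it as $M\eq VEW\pmod p$ for two explicit Vandermonde-type matrices $V,W$ with $VW\eq -I_{p-1}\pmod p$, so that the determinant collapses to that of a very sparse matrix $E$. Specifically, I set $V=[i^k]_{1\ls i\ls p-1,\,0\ls k\ls p-2}$ and $W=[j^{p-1-\ell}]_{0\ls\ell\ls p-2,\,1\ls j\ls p-1}$. Summing the geometric series $\sum_{k=0}^{p-2}(i/j)^k$ together with $a^{p-1}\eq 1\pmod p$ for $a\in(\Z/p\Z)^*$ immediately gives $VW\eq -I_{p-1}\pmod p$ and hence $\det V\det W\eq 1\pmod p$.

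Next, for each $s\in\{0,\ldots,p-2\}$ set $S_s=\sum_{0\ls j\ls n,\,(p-1)\mid j-s}a_j$, and let $t(s)$ be the unique integer in $\{0,1,\ldots,p-2\}$ with $t(s)\eq n-s\pmod{p-1}$. Fermat's theorem rewrites the matrix entry as
$$
x+H(i,j)\eq x+\sum_{s=0}^{p-2}S_s\, i^s j^{t(s)}\pmod p,
$$
and matching with $(VEW)_{ij}=\sum_{k,\ell}E_{k\ell}\,i^k j^{p-1-\ell}$ (using $j^0\eq j^{p-1}\pmod p$ whenever $t(s)=0$) forces $M\eq VEW\pmod p$ for a specific $E$: namely, $E$ has entry $S_s$ at position $(s,p-1-t(s))$ when $t(s)\ne 0$, entry $S_{s_0}$ at position $(s_0,0)$ (where $s_0$ is the unique index with $t(s_0)=0$, i.e., the residue of $n$ modulo $p-1$), and an additional entry $x$ at $(0,0)$. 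Since $\det V\det W\eq1\pmod p$, it follows that $\det M\eq\det E\pmod p$.

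For part (i), $n=p-1$ forces $s_0=0$, so the extra $x$ merges with $S_0=a_0+a_{p-1}$ at position $(0,0)$, and $E=\mathrm{diag}(x+a_0+a_{p-1},a_1,\ldots,a_{p-2})$ (since $p-1-t(s)=s$ for $s\gs 1$), delivering the claimed formula. For part (ii), $n\not\eq 0\pmod{p-1}$, so $s_0\ne 0$ and the nonzero pattern of $E$ outside position $(0,0)$ forms a permutation $\pi$ on $\{0,\ldots,p-2\}$ with $\pi(s_0)=0$ and $\pi(s)=p-1-t(s)$ otherwise. Writing $E=E'+xJ_{00}$, with $E'_{s,\pi(s)}=S_s$ and $J_{00}$ the matrix unit at $(0,0)$, cofactor expansion gives $\det E=\det E'+x\cdot C$, where $C$ is the determinant of the $(p-2)\times(p-2)$ minor obtained by removing row $0$ and column $0$ from $E'$. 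But this minor has an all-zero row: in row $s_0$ the only nonzero entry lay in the deleted column $0$. So $C=0$ and $\det E=\sign{\pi}\prod_{s=0}^{p-2}S_s$.

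It remains to identify $\sign{\pi}$ with $(-1)^n$: the key observation is that in $\Z/(p-1)$ the permutation $\pi$ is just translation by $-n$, since $\pi(s)\eq s-n\pmod{p-1}$ for every $s$ (including $s_0$, where $\pi(s_0)=0$ and $s_0\eq n\pmod{p-1}$). Such a translation decomposes into $d=\gcd(p-1,n)$ equal-length cycles, giving $\sign{\pi}=(-1)^{(p-1)-d}=(-1)^d$ because $p-1$ is even; and $v_2(d)=\min(v_2(p-1),v_2(n))$ with $v_2(p-1)\gs 1$ forces $d\eq n\pmod 2$, whence $\sign{\pi}=(-1)^n$. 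The main obstacle will be the careful bookkeeping around the exceptional index $s_0$: routing the $t(s)=0$ contribution into column $0$ via $j^{p-1}\eq 1\pmod p$ is what drives the dichotomy between (i) and (ii) and makes the vanishing-cofactor argument possible in the latter case.
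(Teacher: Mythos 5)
Your proof is correct, and it takes a genuinely different route from the paper's. The paper keeps the rectangular factorization $[x+H(i,j)]=ACA^T$ with $A=[i^j]_{1\ls i\ls p-1,\,0\ls j\ls n}$ exact over $\Z$, invokes the Weinstein--Aronszajn identity to pass to $\la^{p-n-2}\det(\la I_{n+1}-C[s_{i+j}])$ with $s_k=\sum_i i^k$, and then carries out explicit row and column eliminations on large banded $(n+1)\times(n+1)$ matrices, splitting part (ii) into the subcases $n=p-2$, $p-1<n<3(p-1)/2$ and $3(p-1)/2\ls n<2p-2$ before setting $\la=0$. You instead reduce the exponents modulo $p-1$ first, which collapses the coefficients into the $p-1$ sums $S_s$ and yields a square factorization $M\eq VEW\pmod p$ with $VW\eq -I_{p-1}$ (so $\det V\det W\eq(-1)^{p-1}=1$), localizing the entire computation in the sparse matrix $E$, a generalized permutation matrix perturbed by $x$ at position $(0,0)$. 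This buys a uniform treatment of all $n$ in part (ii), a structural reason why the $x$-dependence disappears there (row $s_0$ of the $(0,0)$-minor of $E'$ is identically zero, so the cofactor multiplying $x$ vanishes), and a conceptual identification of the factor $(-1)^n$ as the signature of the translation-by-$(-n)$ permutation of $\Z/(p-1)\Z$, via the parity argument $\gcd(p-1,n)\eq n\pmod 2$. The delicate bookkeeping all checks out: in part (ii) one has $s_0\not=0$, so the $x$ at $(0,0)$ collides with no $S_s$, while in part (i) it merges with $S_0=a_0+a_{p-1}$ exactly as claimed. The net effect is a shorter and cleaner argument than the paper's for the same statement, at the cost of relying on the invertibility of the square Vandermonde matrices rather than on the more flexible rectangular identity the paper uses elsewhere.
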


By taking $H(X,Y)=(X^2+cXY+dY^2)^n$ with $c,d\in\Z$, we obtain the following result.

\begin{corollary}
Let $p>3$ be a prime, and let $c,d\in\Z$ and $n\in\{(p+1)/2,\ldots,p-2\}$.
Then  $\det[x+(i^2+cij+dj^2)^n]_{1\ls i,j\ls p-1}$ modulo $p$ is independent of $x$.
\end{corollary}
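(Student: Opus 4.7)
The plan is to apply Theorem \ref{shift}(ii) directly with the choice $H(X,Y) := (X^2+cXY+dY^2)^n$. Expanding, this is a homogeneous binary form of total degree $N := 2n$, say
$$H(X,Y)=\sum_{k=0}^{N}a_k X^k Y^{N-k},$$
where the coefficients $a_k\in\Z$ are fixed quantities depending only on $c$, $d$ and $n$; crucially, they do not involve the variable $x$.

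To invoke Theorem \ref{shift}(ii) with its parameter ``$n$'' relabelled as our $N$, I need to verify $N=p-2$ or $p-1<N<2p-2$. The hypothesis $(p+1)/2\ls n\ls p-2$ combined with $p>3$ gives $p+1\ls N\ls 2p-4$, which yields $p-1<N<2p-2$; note that the first clause $N=p-2$ is in any case impossible here, since $N$ is even while $p-2$ is odd.

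Once the hypothesis is in place, Theorem \ref{shift}(ii) supplies
$$\det\bigl[x+(i^2+cij+dj^2)^n\bigr]_{1\ls i,j\ls p-1}\equiv(-1)^{N}\prod_{k=0}^{p-2}\sum_{\substack{0\ls j\ls N\\ p-1\mid j-k}}a_j\pmod{p},$$
and the right-hand side depends only on $c$, $d$, $n$, $p$, with no appearance of $x$. This is exactly the conclusion of the corollary. Since the substantive work has already been done in Theorem \ref{shift}, I do not anticipate any real obstacle; the corollary is essentially the observation that the total degree $2n$ of $(X^2+cXY+dY^2)^n$ lands in the admissible window $(p-1,2p-2)$, together with the trivial remark that the coefficients $a_k$ are $x$-free.
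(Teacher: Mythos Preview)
Your proof is correct and follows exactly the paper's approach: the paper simply states ``By taking $H(X,Y)=(X^2+cXY+dY^2)^n$ with $c,d\in\Z$, we obtain the following result,'' and you have spelled out the verification that $N=2n$ lies in the range $(p-1,2p-2)$ required by Theorem \ref{shift}(ii). There is nothing to add.
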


Let $p$ be an odd prime, and let $c,d\in\Z$. Sun \cite{Sun24} first introduced
$$D_p(c,d)=\det[(i^2+cij+dj^2)^{p-2}]_{1\ls i,j\ls p-1}$$
 motivated by his conjecture on $\det[1/(i^2-ij+j^2)]_{1\ls i,j\ls p-1}$ for $p\eq2\pmod3$ (cf.
 \cite[Remark 1.3]{Sun19}).
 For $(\f{D_p(1,1)}p)$ and $(\f{D_p(2,2)}p)$, one may consult
  \cite{LuoSun,WuSheNi}. See also \cite{SheWu} and \cite{LuoXia} for further results in this direction.

  Let $c,d\in\Z$. Sun \cite[Section 5]{SunProb} investigated
  $$\{c,d\}_n=\det\l[\l(\f{i^2+cij+dj^2}n\r)\r]_{1<i,j<n-1}$$
  with $n$ an odd number greater than $3$. Motivated by this, we
  study
$$
D_p^-(c,d):=\det [(i^2+cij+dj^2)^{p-2}]_{1<i,j<p-1}
$$
for any prime $p>3$. The difficulty of evaluating $D_p^-(c,d)$ lies in the fact that the indices do not run through a whole reduced system of residues modulo $p$.

For a prime $p$, let $\Z_p$ denote the ring of $p$-adic integers. It is well known that
each $p$-adic integer $\al$ can be written uniquely as a $p$-adic series
$\sum_{k=0}^\infty a_kp^k$ with $a_k\in\{0,\ldots,p-1\}$, which converges with respect to the $p$-adic norm $|\ |_p$. Hence we have the congruence
$\al\eq \sum_{k=0}^{n-1}a_kp^k\pmod{p^n}$ (in the ring $\Z_p$) for any positive integer $n$.
For example,
$$\f1{1-p}=\sum_{k=0}^\infty p^k\eq\sum_{k=0}^{n-1}p^k=\f{1-p^n}{1-p}\pmod{p^n}$$
for any positive integer $n$. A rational number is a $p$-adic integer if and only if
its denominator is not divisible by $p$. For $a,b,c\in\Z$ with $p\nmid b$, the congruence
$a/b\eq c\pmod p$ in the ring $\Z_p$ is actually equivalent to the congruence $a\eq bc\pmod p$
in the ring $\Z$. For instance, $2/3\eq 3\pmod 7$.

We will prove the following result in Section 4.

\begin{theorem}\label{formula}
Let $p>3$ be a prime, and let
$$
P(T)=a_0+a_1T+a_2T^2+\cdots +a_{p-2}T^{p-2},
$$
where $a_0,\ldots,a_{p-2}\in\Z_p$. Then we have
$$
\det \left[P(ij^{-1})\right]_{1<i,j<p-1}\equiv 4\sum_{i=0}^{(p-3)/{2}}\hat{a}_{2i}\times\sum_{i=0}^{(p-3)/{2}}\hat{a}_{2i+1}\pmod{p},
$$
where
$$
\hat{a}_k=\prod_{0\ls j\ls p-2 \atop 2\mid j-k, j\neq k}a_j\qquad\t{for all}\ \ k=0,\ldots,p-2.
$$
\end{theorem}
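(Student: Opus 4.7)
The plan is to recognise the matrix as the $(p-3)\times(p-3)$ principal minor of the full $(p-1)\times(p-1)$ group-circulant matrix $M=[P(ij^{-1})]_{i,j\in\mathbb F_p^*}$ obtained by deleting the two rows and two columns indexed by $\pm1$, and to apply Jacobi's complementary-minor identity together with the character-diagonalisation of $M$ over $\mathbb F_p^*$.

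Both sides of the desired congruence are polynomial in $a_0,\ldots,a_{p-2}$ over $\mathbb F_p$, so it suffices to verify the identity in the function field $\mathbb F_p(a_0,\ldots,a_{p-2})$, where every $a_k$ is invertible; the general case then follows from the polynomial-identity principle. The group-circulant $M$ is diagonalised by the character vectors $v_m=(i^m)_{i\in\mathbb F_p^*}$ for $m=0,\ldots,p-2$, and a direct computation using the orthogonality $\sum_{x\in\mathbb F_p^*}x^r\equiv-\mathbf 1_{(p-1)\mid r}\pmod p$ yields eigenvalues $\lambda_m\equiv-a_m\pmod p$. Consequently
$$\det M\equiv\prod_{m=0}^{p-2}a_m\pmod p,$$
and the inverse $M^{-1}=[g(ij^{-1})]_{i,j}$ is again group-circulant, where $g$ is the convolution-inverse of $P$ on $\mathbb F_p^*$ (so $\hat g(\chi_m)=1/\hat P(\chi_m)$).

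Next I apply the Jacobi complementary-minor identity with $I=J=\{1,-1\}$: since the index set is self-complementary, the Jacobi sign $(-1)^{\sigma(I)+\sigma(J)}=(-1)^{2\sigma(I)}$ is automatically $+1$, and one obtains
$$\det[P(ij^{-1})]_{1<i,j<p-1}=\det M\cdot\det\begin{pmatrix}g(1)&g(-1)\\ g(-1)&g(1)\end{pmatrix}=\det M\cdot(g(1)^2-g(-1)^2).$$
The inverse Fourier transform on $\mathbb F_p^*$, combined with $(p-1)^{-1}\equiv-1\pmod p$, gives $g(1)\equiv\sum_{k=0}^{p-2}a_k^{-1}$ and $g(-1)\equiv\sum_{k=0}^{p-2}(-1)^k a_k^{-1}\pmod p$, whence $g(1)+g(-1)\equiv 2\sum_{k\text{ even}}a_k^{-1}$ and $g(1)-g(-1)\equiv 2\sum_{k\text{ odd}}a_k^{-1}$, so
$$g(1)^2-g(-1)^2\equiv 4\Big(\sum_{k\text{ even}}a_k^{-1}\Big)\Big(\sum_{k\text{ odd}}a_k^{-1}\Big)\pmod p.$$

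Finally, multiplying by $\det M\equiv\prod_k a_k$ and splitting the product into its even- and odd-indexed factors, the partial product $\prod_{k\text{ even}}a_k\cdot\sum_{k\text{ even}}a_k^{-1}$ collapses to $\sum_{i=0}^{(p-3)/2}\hat a_{2i}$, and likewise the odd part to $\sum_{i=0}^{(p-3)/2}\hat a_{2i+1}$, producing exactly $4\sum_{i=0}^{(p-3)/2}\hat a_{2i}\cdot\sum_{i=0}^{(p-3)/2}\hat a_{2i+1}\pmod p$. The main technical obstacle is the careful bookkeeping of constants: the sign from Jacobi's identity, the factors $\pm(p-1)^{-1}$ produced by Fourier inversion, and the fact that $p-1$ is even must conspire, via Fermat's little theorem, to leave the lone constant $4$ on the right-hand side.
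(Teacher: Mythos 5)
Your proposal is correct, and it takes a genuinely different route from the paper. The paper writes $[P(ij^{-1})j^{p-2}]_{1<i,j<p-1}$ as $ACA^{T}$ with $A=[i^j]$ of size $(p-3)\times(p-1)$ and $C$ an anti-diagonal matrix of the $a_k$, invokes the Weinstein--Aronszajn identity to pass to the characteristic polynomial of the $(p-1)\times(p-1)$ matrix $C[s_{i+j}]$ built from the power sums $s_k=\sum_{i=2}^{p-2}i^k$, and then performs an explicit ladder of row/column operations followed by the matrix-determinant lemma. You instead view the full matrix $M=[P(ij^{-1})]_{i,j\in\mathbb F_p^\ast}$ as a group circulant, diagonalize it by the multiplicative characters $\chi_m(x)=x^m$ (which live in $\mathbb F_p$ since $\mathbb F_p$ contains all $(p-1)$-th roots of unity), obtain $\det M\equiv\prod_k a_k$ and the convolution inverse $g$ with $g(1)\equiv\sum_k a_k^{-1}$, $g(-1)\equiv\sum_k(-1)^ka_k^{-1}$, and then apply Jacobi's complementary-minor theorem with $I=J=\{1,-1\}$ to reduce the $(p-3)\times(p-3)$ principal minor to the $2\times2$ minor $g(1)^2-g(-1)^2$. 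All the constants check out: the Jacobi sign is $+1$ since $I=J$, the factor $(p-1)^{-1}\equiv-1$ from Fourier inversion cancels the $-1$ in each eigenvalue $-a_m$, and multiplying $4(\sum_{2\mid k}a_k^{-1})(\sum_{2\nmid k}a_k^{-1})$ by $\prod_k a_k$ yields exactly $4\sum_i\hat a_{2i}\sum_i\hat a_{2i+1}$ (I verified the whole identity for $p=5$). Your reduction to the generic case is also sound: you prove the identity in $\mathbb F_p(a_0,\ldots,a_{p-2})$ with the $a_k$ as indeterminates, where $M$ is genuinely invertible, and since both sides lie in the polynomial ring, equality there specializes to every choice of $a_k$; this correctly handles the degenerate cases where the paper's formula has some $a_k\equiv0$ and $M$ is singular, a step that is essential because Jacobi's identity needs $\det M\neq0$. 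What each approach buys: yours is shorter and more conceptual, making transparent where the factor $4$ and the even/odd splitting come from (the values $\chi_m(-1)=(-1)^m$); the paper's is more computational but self-contained, avoiding character theory and the invertibility issue altogether by working with the characteristic polynomial.
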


For an odd prime $p$ and a $p$-adic integer $\al$, we define $(\f{\al}p)$
as the Legendre symbol $(\f rp)$, where $r$ is the unique integer in $\{0,\ldots,p-1\}$
with $\al\eq r\pmod p$. If $\al=a/b$ with $a,b\in\Z$ and $p\nmid b$, then $(\f{\al}p)$
coincides with the Legendre symbol $(\f{ab}p)$.

As an application of Theorem \ref{formula}, we will prove the following result.

\begin{corollary}\label{Dp^-(1,1)}
Let $p>3$ be a prime.

 {\rm (i)} When $p\equiv 2\pmod{3}$, we have
 $$D_p^-(1,1)\equiv2^{(p-8)/{3}}3^4\pmod{p}\ \ \t{and}\ \ \left(\f{D_p^-(1,1)}{p}\right)=\left(\f{2}{p}\right).$$

{\rm (ii)} When $p\equiv 7\pmod{9}$, we have $D_p^-(1,1)\equiv0\pmod{p}$.

{\rm (iii)} When $p\equiv 1,4\pmod{9}$, we have $$\left(\f{D_p^-(1,1)}{p}\right)=\l(\f{\Sigma_1\Sigma_2}{p}\r),$$
 where
\begin{align*}
\Sigma_1=\sum_{k=1}^{(p-1)/{6}}\l(\f{1}{18k-13}-\f{1}{18k-2}\r)+\f{1}{6},
\end{align*}
and
\begin{align*}
\Sigma_2=\sum_{k=1}^{(p-1)/{6}}\l(\f{1}{18k-4}-\f{1}{18k-11}\r)+\f{1}{6}.
\end{align*}
\end{corollary}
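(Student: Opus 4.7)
The plan is to apply Theorem \ref{formula} to the polynomial $P(T)\in\mathbb F_p[T]$ of degree at most $p-2$ that represents the function $t\mapsto(t^2+t+1)^{p-2}$ on $\mathbb F_p^*$. Since every $j\in\{2,\ldots,p-2\}$ is a unit modulo $p$, Fermat's little theorem gives
$$(i^2+ij+j^2)^{p-2}=j^{2(p-2)}\bigl((ij^{-1})^2+ij^{-1}+1\bigr)^{p-2}\equiv j^{-2}P(ij^{-1})\pmod p.$$
Pulling $j^{-2}$ out of the $j$-th column and using $(p-2)!\equiv 1\pmod p$, we get $D_p^-(1,1)\equiv\det[P(ij^{-1})]_{1<i,j<p-1}\pmod p$, and hence by Theorem \ref{formula}
$$D_p^-(1,1)\equiv 4\Bigl(\sum_{i=0}^{(p-3)/2}\hat a_{2i}\Bigr)\Bigl(\sum_{i=0}^{(p-3)/2}\hat a_{2i+1}\Bigr)\pmod p,$$
where $a_0,\ldots,a_{p-2}$ are the coefficients of $P(T)$.

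To compute these coefficients I would start from the Frobenius identity $(1+T+T^2)^p=1+T^p+T^{2p}$ in $\mathbb F_p[T]$; reducing modulo $T^{p-1}-1$ yields
$$P(T)(T^2+T+1)^2\equiv T^2+T+1\pmod{T^{p-1}-1}.$$
When $p\equiv 2\pmod 3$, $T^2+T+1$ is coprime to $T^{p-1}-1$, so cancellation gives $P(T)(T^2+T+1)\equiv 1$; matching coefficients produces the three-term recurrence $a_{k-2}+a_{k-1}+a_k\equiv 0\pmod p$ for $2\le k\le p-2$, together with wrap-around equations at $k=0,1$. Solving forces $(a_k)$ to be periodic of period $3$ with $a_k\equiv 1/3$ when $k\equiv 0,2\pmod 3$ and $a_k\equiv -2/3$ when $k\equiv 1\pmod 3$. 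When $p\equiv 1\pmod 3$, instead $T^2+T+1$ divides $T^{p-1}-1$; then $P(T)$ vanishes at both primitive cube roots of unity in $\mathbb F_p$ and factors as $P(T)=(T^2+T+1)Q(T)$ with $\deg Q\le p-4$, where $Q$ is pinned down modulo $(T^{p-1}-1)/(T^2+T+1)$ by an analogous recurrence and $a_k=q_{k-2}+q_{k-1}+q_k$.

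With the $a_k$ in hand, the proof concludes by evaluating the two sums. For (i), writing $p=6h+5$ and counting how many even (resp.\ odd) indices in $\{0,\ldots,p-2\}$ lie in each residue class modulo $3$, a short calculation reduces the product of the two sums to $-18\cdot 4^h(3h+4)(3h+1)\pmod p$, which rearranges via $(6h+5)^2\equiv 0\pmod p$ into $2^{(p-8)/3}\cdot 3^4$; since $(p-8)/3=2h-1$ is odd, the Jacobi symbol collapses to $\jacob{2}{p}$. For (ii), $p\equiv 7\pmod 9$, the explicit form of $Q$ forces at least two $a_k$ of the same parity to vanish mod $p$, so every $\hat a_k$ of that parity contains a zero factor, the corresponding sum is $0$, and $D_p^-(1,1)\equiv 0$. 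For (iii), $p\equiv 1,4\pmod 9$, only one $a_k$ of each parity is zero; after dividing by the nonvanishing parts of the products, the remaining $\hat a_k$'s reorganize into reciprocal sums indexed by the arithmetic progressions $18k-13,\,18k-2,\,18k-4,\,18k-11$, which are exactly $\Sigma_1$ and $\Sigma_2$.

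The principal obstacle is the bookkeeping when $p\equiv 1\pmod 3$: one must determine the exact zero pattern of the $a_k$ as a function of $p\bmod 9$, and then match the surviving $\hat a_k$'s to the precise expressions $\Sigma_1,\Sigma_2$ in the statement. This rests on a careful analysis of the linear recurrence satisfied by the coefficients of $Q(T)$ modulo $(T^{p-1}-1)/(T^2+T+1)$, combined with the parity decomposition arising in Theorem \ref{formula}.
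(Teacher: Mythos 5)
Your overall strategy coincides with the paper's: reduce $D_p^-(1,1)$ to $\det[P(ij^{-1})]_{1<i,j<p-1}$ via $j^{2(p-2)}\equiv j^{-2}$ and Wilson's theorem, then feed the coefficients of $P$ into Theorem \ref{formula}. Your derivation of those coefficients for $p\equiv2\pmod3$ from $(T^2+T+1)^p\equiv T^2+T+1\pmod{T^{p-1}-1}$ is a legitimate alternative to the paper's citation of known formulas, and your Case (i) arithmetic ($-18\cdot4^h(3h+4)(3h+1)\equiv 2^{2h-1}3^4$ for $p=6h+5$) checks out. Part (ii) is also essentially the paper's argument: two odd-indexed coefficients vanish, killing every $\hat a_k$ with $k$ odd.

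The genuine gap is in part (iii). You claim that for $p\equiv1,4\pmod9$ exactly one $a_k$ of each parity vanishes modulo $p$. This is false: the explicit coefficients (namely $a_k\equiv k+5/3$, $-k-4/3$, $-1/3$ according to $k\bmod 3$, which are forced since a function on $\mathbb F_p^*$ determines its representing polynomial of degree $\le p-2$ uniquely) show that $a_k\equiv0\pmod p$ requires $p\mid 3k+5$ or $p\mid 3k+4$, which happens only when $p\equiv5,7,8\pmod 9$; for $p\equiv1,4\pmod9$ \emph{no} coefficient vanishes. Your premise is not a harmless slip: if exactly one even-indexed $a_{k_0}$ vanished, then $\sum_i\hat a_{2i}$ would collapse to the single term $\hat a_{k_0}$, a pure product, and could never produce the reciprocal sum $\Sigma_1$; the correct route instead writes $\sum_i\hat a_{2i}=\bigl(\prod_{2\mid j}a_j\bigr)\sum_i a_{2i}^{-1}$ using that all $a_j$ are units. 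You are also missing the one further ingredient the paper needs: the symmetry $a_k\equiv a_{p-3-k}\pmod p$, which exhibits $\prod_{k=0}^{p-2}a_k$ as $a_{(p-3)/2}\,a_{p-2}$ times a perfect square, so that the Legendre symbol of $4\prod_k a_k\cdot\sum a_{2k}^{-1}\cdot\sum a_{2k+1}^{-1}$ reduces to $\bigl(\frac{\Sigma_1\Sigma_2}{p}\bigr)$. Without both corrections the proposed argument cannot reach the stated formula.
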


{\it Example 1.1}. Let us illustrate Corollary \ref{Dp^-(1,1)}(iii) with $p=19$. It is easy to verify that
$$D_p^-(1,1)\eq -5\pmod{p},\ \Sigma_1\eq3\pmod{p}\ \t{and}\ \Sigma_2\eq-8\pmod{p}.$$
Thus
$$\l(\f{D_p^-(1,1)}p\r)=\l(\f{-5}{19}\r)=\l(\f{3\times(-8)}{19}\r)=\l(\f{\Sigma_1\Sigma_2}p\r).$$

The following conjecture of the second author might stimulate further research.

\begin{conjecture} Let $p>3$ be a prime.

{\rm (i)} We have $p\mid D_p^-(2,2)$ if $p\eq7\pmod8$.

{\rm (ii)} We have $p\mid D_p^-(3,3)$ if $p>5$ and $p\eq2\pmod3$.

{\rm (iii)} We have $p\mid D_p^-(3,1)$ if $p\eq3,7\pmod{20}$.
\end{conjecture}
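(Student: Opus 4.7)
The plan is to reduce each $D_p^-(c,d)$ to the setting of Theorem~\ref{formula}. In all three cases the quadratic $T^2+cT+d$ is irreducible over $\mathbb F_p$: the discriminants are $-4$, $-3$, $5$ respectively, and the stated congruences on $p$ force $-1$, $-3$, $5$ to be quadratic non-residues modulo $p$. Hence $a^2+ca+d\not\eq 0\pmod p$ for every $a\in\mathbb F_p$, so by Fermat's little theorem, for $i,j\in\{2,\ldots,p-2\}$,
$$(i^2+cij+dj^2)^{p-2}\eq j^{-2}\l((ij^{-1})^2+c(ij^{-1})+d\r)^{-1}\pmod p.$$
Pulling the column factor $j^{-2}$ out and using $\prod_{j=2}^{p-2}j^{-2}\eq 1\pmod p$ (Wilson's theorem), one gets
$$D_p^-(c,d)\eq \det[P(ij^{-1})]_{1<i,j<p-1}\pmod p,$$
where $P(T)\in\mathbb F_p[T]$ is the unique polynomial of degree at most $p-2$ with $P(a)\eq(a^2+ca+d)^{-1}\pmod p$ for every $a\in\mathbb F_p^*$.

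Writing $P(T)=\sum_{k=0}^{p-2}a_kT^k$ and applying Theorem~\ref{formula} then gives $D_p^-(c,d)\eq 4S_0S_1\pmod p$, where for $\ve\in\{0,1\}$ the quantity $S_\ve$ is the $((p-3)/2)$-th elementary symmetric polynomial in the $(p-1)/2$ coefficients whose index has parity $\ve$. Inverting the multiplicative discrete Fourier transform over $\mathbb F_p^*$ supplies the explicit formula
$$a_k\eq -\sum_{a=1}^{p-1}\f{a^{-k}}{a^2+ca+d}\pmod p,$$
and, provided none of those $a_k$'s vanish,
$$S_\ve\eq\bg(\prod_{\substack{0\ls k\ls p-2\\ k\eq\ve\,(2)}}a_k\bg)\sum_{\substack{0\ls k\ls p-2\\ k\eq\ve\,(2)}}a_k^{-1}\pmod p.$$
The conjecture is therefore reduced to showing that $S_0\eq 0$ or $S_1\eq 0\pmod p$ under the stated hypotheses.

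The natural route is to exhibit a Frobenius-type involution of $\mathbb F_p^*$ pairing the terms of $\sum_k a_k^{-1}$ with opposite signs. For part~(i) one would work over $\mathbb F_{p^2}=\mathbb F_p(\sqrt{-1})$, partial-fractioning $1/(b^2+1)$ after the shift $b=a+1$ and exploiting the combined conditions $p\eq 3\pmod 4$ and $(\f 2p)=1$ that are implied by $p\eq 7\pmod 8$. An analogous plan uses $\mathbb F_p(\sqrt{-3})$ and a cubic character for part~(ii) under $p\eq 2\pmod 3$, and $\mathbb F_p(\sqrt 5)$ together with the quintic structure arising from $p\eq 3,7\pmod{20}$ for part~(iii). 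The main obstacle will be carrying these cancellations through cleanly: the relevant sums are double sums over $\mathbb F_p^*$ with a multiplicative convolution structure, so one must either evaluate them in closed form via Jacobi-sum-like identities or construct a sign-reversing involution explicitly. A preliminary numerical step in each case should determine which of $S_0,S_1$ vanishes and whether the vanishing is due to a single coefficient being zero, so that the correct cancellation pattern can be targeted.
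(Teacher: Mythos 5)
You should note at the outset that the statement you are proving is labelled a \emph{Conjecture} in the paper: the authors explicitly present it as open (``might stimulate further research'') and give no proof, so there is no argument of theirs to compare yours against. The question is therefore only whether your proposal actually settles it, and it does not. Your reduction is sound and closely parallels what the paper does for $D_p^-(1,1)$ in Section 5: irreducibility of $T^2+cT+d$ over $\mathbb F_p$ under the stated congruences is correct (the discriminants $-4$, $-3$, $5$ are indeed non-residues when $p\equiv 7\pmod 8$, $p\equiv 2\pmod 3$, $p\equiv 3,7\pmod{20}$ respectively), the passage to $\det[P(ij^{-1})]_{1<i,j<p-1}$ via Wilson's theorem is the same device used in the proof of Theorem 4.1, and the coefficient formula $a_k\equiv-\sum_{a=1}^{p-1}a^{-k}(a^2+ca+d)^{-1}$ is a correct inversion. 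But everything after ``The conjecture is therefore reduced to showing that $S_0\equiv 0$ or $S_1\equiv 0$'' is a plan, not a proof: you describe a ``natural route,'' name the ``main obstacle,'' and propose a ``preliminary numerical step'' to decide which sum vanishes. That is precisely the part of the problem that is open, and it is left unexecuted.

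There is also a structural worry in the one formula you do assert, namely
$S_\varepsilon\equiv\bigl(\prod_k a_k\bigr)\sum_k a_k^{-1}$ ``provided none of those $a_k$'s vanish.'' Judging from the paper's own use of Theorem 4.1 (Case 2 of the proof of Corollary 1.5, where $p\equiv 7\pmod 9$), the vanishing of $D_p^-$ can occur exactly because two same-parity coefficients $a_k$ vanish, which kills every $\hat a_k$ of that parity. So the hypothesis under which your formula for $S_\varepsilon$ is valid may exclude the very mechanism that makes the conjecture true, and the sign-reversing-involution strategy you sketch would then be aimed at the wrong target. In short: the reduction to Theorem 4.1 is correct and useful, but the conjecture remains unproved by this proposal.
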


We are going to prove Theorem \ref{cdp}, Theorem \ref{shift}, Theorem
\ref{formula} and Corollary \ref{Dp^-(1,1)} in Sections 2, 3, 4 and 5, respectively.

 For convenience,
for a matrix $M=[m_{ij}]_{0\ls i,j\ls n}$, we call the row $(m_{i0},\ldots,m_{in})$ with $0\ls i\ls n$
the {\it $i$-row} of $M$ which is actually the $(i+1)$-th row of $M$, and define the {\it $j$-column} with $0\ls j\ls n$ similarly. Such terms will be used in Sections 3 and 4.

\section{Proof of Theorem \ref{cdp}}
\setcounter{lemma}{0}
\setcounter{theorem}{0}
\setcounter{equation}{0}
\setcounter{conjecture}{0}
\setcounter{remark}{0}
\setcounter{corollary}{0}
\begin{lemma}\label{not-squarefree-case}
Suppose that $n>1$ is odd and not squarefree.
Then, for any $c,d\in\Z$ we have $[c,d]_n=0$.
\end{lemma}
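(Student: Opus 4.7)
The plan is to exhibit two identical rows in the matrix $[(\frac{i^2+cij+dj^2}{n})]_{0\ls i,j\ls n-1}$. Since the determinant has rows and columns indexed from $0$ to $n-1$, I have access to the row $i=0$ as well as rows indexed by small positive integers, and the main idea is to compare the $0$-row with the $r$-row, where $r=\mathrm{rad}(n):=\prod_{p\mid n}p$ is the squarefree kernel of $n$.

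The key input is that the Jacobi symbol $(\f{a}{n})$ depends on $a$ only modulo $r$. Indeed, writing $n=p_1^{e_1}\cdots p_k^{e_k}$, the definition gives
$$\l(\f{a}{n}\r)=\prod_{i=1}^k\l(\f{a}{p_i}\r)^{e_i},$$
and each factor depends only on $a\bmod p_i$. By the Chinese remainder theorem, knowing $a$ modulo each $p_i$ is the same as knowing $a\bmod r$. This reduction is essentially tautological and I do not expect it to be an obstacle.

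Next I would exploit that $n$ is not squarefree: this forces $r<n$, so $0$ and $r$ are distinct indices both lying in $\{0,1,\ldots,n-1\}$. For every column index $0\ls j\ls n-1$, a direct computation gives
$$r^2+crj+dj^2\equiv dj^2\equiv 0^2+c\cdot 0\cdot j+dj^2\pmod{r},$$
so by the above observation the two Jacobi symbols $(\f{r^2+crj+dj^2}{n})$ and $(\f{0^2+c\cdot 0\cdot j+dj^2}{n})$ are equal. Hence the $0$-row and the $r$-row of the matrix coincide entrywise.

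Since the matrix has two equal rows, its determinant vanishes, proving $[c,d]_n=0$. The only step requiring a moment of thought is the mod-$r$ descent of the Jacobi symbol, but this follows immediately from multiplicativity in the ``denominator''; no genuine difficulty arises.
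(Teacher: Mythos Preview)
Your proof is correct and rests on the same idea as the paper's: produce an index $k\in\{1,\ldots,n-1\}$ divisible by every prime dividing $n$, so that the $k$-th row (or column) of the matrix agrees with the $0$-th one. The paper builds such a $k$ via the Chinese Remainder Theorem (taking $k\equiv 0\pmod m$ and $k\equiv p\pmod{p^\alpha}$ for one prime power $p^\alpha\parallel n$ with $\alpha\ge 2$) and compares columns; you instead take $k=\mathrm{rad}(n)$ directly and compare rows, framing the computation through the observation that $\l(\f{a}{n}\r)$ depends only on $a\bmod\mathrm{rad}(n)$. Your version is a bit more streamlined---no CRT is needed and the governing fact is stated once and for all---but the underlying mechanism is identical.
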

\begin{proof} Write $n=p^{\alpha}m$, where $p$ is an odd prime and $\alpha,m\in\mathbb{Z}^{+}=\{1,2,3,\ldots\}$ such that $\alpha>1$ and $p\nmid m$. By the Chinese Remainder Theorem, there exists a number $k\in\{1,\ldots, n-1\}$ such that $m\mid k$ and $k\equiv p\pmod{p^{\alpha}}.$ For any $0\ls i\ls n-1$, we have
\begin{align*}
\left(\f{i^2+cik+dk^2}{n}\right)=\left(\f{i^2+cik+dk^2}{m}\right)\left(\f{i^2+cik+dk^2}{p}\right)^{\alpha}
=\left(\f{i^2}{m}\right)\left(\f{i^2}{p}\right)^{\alpha}
=\left(\f{i^2+ci0+d0^2}{n}\right).
\end{align*}
Therefore $[c,d]_n=0$.
\end{proof}

 We are now ready to prove Theorem \ref{cdp}.
\medskip

 \noindent{\it Proof of Theorem \ref{cdp}.} In light of Lemma \ref{not-squarefree-case},
 it suffices to assume that $n$ is squarefree. Let
 $$
 P^{+}(n):=\l\{p:\ \t{$p$ is a prime divisor of $n$ with $\l(\f{d}{p}\r)=1$}\r\}
 $$
and
 $$
 P^{-}(n)=\l\{p:\ \t{$p$ is a prime divisor of $n$ with $\l(\f{d}{p}\r)=-1$}\r\}.
 $$
 By the Chinese Remainder Theorem,
\begin{align*}
 &\sum_{0\ls i\ls n-1 \atop (i,n)=1}\left(\f{i^2+cij+dj^2}{n}\right)\\
=&\sum_{0\ls i\ls n-1 \atop (i,n)=1}\prod_{p\in P^{+}(n)\cup P^{-}(n)}\left(\f{i^2+cij+dj^2}{p}\right)\\
=&\prod_{p\in P^{+}(n)\cup P^{-}(n)}\sum_{1\ls x\ls p-1}\left(\f{x^2+cxj+dj^2}{p}\right)\\
=&\prod_{p\in P^{+}(n) \atop p\mid (c^2-4d)j}\(p-1-\(\f{j}{p}\)^2\) \times \prod_{p\in P^{+}(n) \atop p\nmid (c^2-4d)j}(-2)\times \prod_{p\in P^{-}(n) \atop p\mid j}(p-1)\times \prod_{p\in P^{-}(n) \atop p\nmid j}0
\end{align*}
with the aid of the fact that $(\f{d}{p})=-1$ implies $p\nmid (c^2-4d)$.

Let $Q=\prod_{p\in P^{-}(n)}p$, and define the function $f:P^{+}(n)\to \Z$ by
\begin{equation*}
f(p)=\begin{cases} p-2 &\mbox{if $p\nmid (c^2-4d)$,}\\-2 &\mbox{if $p\mid (c^2-4d)$.}\end{cases}
\end{equation*}
Then
\begin{equation*}
 \sum_{0\ls i\ls n-1 \atop (i,n)=1}\left(\f{i^2+cij+dj^2}{n}\right)=\begin{cases} 0&\mbox{if $Q\nmid j$,}\\
 \varphi(Q)\times\prod_{p\in P^{+}(n)\atop p\mid j}(p-1)\times\prod_{p\in P^{+}(n)\atop p\nmid j}f(p)&\mbox{if $Q\mid j$.}\end{cases}
\end{equation*}
For any subset $A$ of $P^{+}(n)$, define $p(A)=\prod_{p\in A}p$. Via similar arguments, we get
\begin{align*}
 &\sum_{0\ls i\ls n-1 \atop (i,n)=p(A)}\left(\f{i^2+cij+dj^2}{n}\right)\\
 =&\begin{cases} 0&\mbox{if $Q\nmid j$ or $(p(A),j)>1$,}\\
 \varphi(Q)\times\prod_{p\in P^{+}(n)\setminus A\atop p\mid j}(p-1)\times\prod_{p\in P^{+}(n)\setminus A\atop p\nmid j}f(p)&\mbox{if $Q\mid j$ and $(p(A),j)=1$.}\end{cases}
\end{align*}
Thus, when $Q\nmid j$ we have $$\sum_{0\ls i\ls n-1 \atop (i,n)=1}\left(\f{i^2+cij+dj^2}{n}\right)=\prod_{p\in A}f(p)\times \sum_{0\ls i\ls n-1 \atop (i,n)=p(A)}\left(\f{i^2+cij+dj^2}{n}\right)=0.$$
When $Q\mid j$, we have
\begin{align*}
 &\(\sum_{0\ls i\ls n-1 \atop (i,n)=1}\left(\f{i^2+cij+dj^2}{n}\right)\)^{-1}\prod_{p\in A}f(p)
 \times\sum_{0\ls i\ls n-1 \atop (i,n)=p(A)}\left(\f{i^2+cij+dj^2}{n}\right)
\\ =&\ \begin{cases} 0&\mbox{if $(p(A),j)>1$,}\\
                 1&\mbox{if $(p(A),j)=1$.}\end{cases}
\end{align*}

Let $\mu$ be  the M\"obius function. Then
\begin{align*}
 &\sum_{A\se P^{+}(n)}\mu(p(A))\prod_{p\in A}f(p)\times\sum_{0\ls i\ls n-1 \atop (i,n)=p(A)}\left(\f{i^2+cij+dj^2}{n}\right)\\
 =&\sum_{0\ls i\ls n-1 \atop (i,n)=1}\left(\f{i^2+cij+dj^2}{n}\right)\times\sum_{A\se P^{+}(n)\atop (p(A),j)=1}\mu(p(A))\\
 =&\sum_{0\ls i\ls n-1 \atop (i,n)=1}\left(\f{i^2+cij+dj^2}{n}\right)\times\sum_{d\mid \f{p(P^{+}(n))}{(p(P^{+}(n)),j)}}\mu(d)\\
 =&\begin{cases}\varphi(n)&\mbox{if $j=0$,}\\0&\mbox{otherwise.}\end{cases}
\end{align*}
The last equality follows from the well-known identity (cf. \cite[p.\,19]{IR})
\begin{equation*}
\sum_{d\mid k}\mu(d)=\begin{cases}1&\mbox{if $k=1$,}\\0&\mbox{if}\ k\in\{2,3,\ldots\}.\end{cases}
\end{equation*}
Thus, via certain elementary row transformations we obtain that $[c,d]_n=\det [a_{ij}]_{0\ls i,j\ls n-1}$, where
\begin{equation*}
  a_{ij}=\begin{cases}
           \varphi(n) & \mbox{if $i=1$\ \t{and}\ $j=0$, }  \\
           0 &\mbox{if $i=1$\ \t{and}\ $j\not=0$,}\\
           \left(\f{i^2+cij+dj^2}{n}\right) & \mbox{otherwise}.
         \end{cases}
\end{equation*}
Similarly,
\begin{align*}
 \sum_{A\se P^{+}(n)}\mu(p(A))\prod_{p\in A}f(p)\times\sum_{0\ls j\ls n-1 \atop (j,n)=p(A)}\left(\f{i^2+cij+dj^2}{n}\right)
 =\begin{cases}-\varphi(n)&\mbox{if $i=0$}\\0&\mbox{otherwise,}\end{cases}
\end{align*}
and hence $\det [a_{ij}]_{0\ls i,j\ls n-1}=\det [b_{ij}]_{0\ls i,j\ls n-1}$, where
\begin{equation*}
  b_{ij}=\begin{cases}
           \varphi(n) & \mbox{if $i=1$ and $j=0$,}  \\
           -\varphi(n) & \mbox{if $i=0$ and $j=1$,}  \\
           0 &\mbox{if $i=1$ and $j\not=0$, or $i\not=0$ and $j=1$,}\\
           \left(\f{i^2+cij+dj^2}{n}\right) & \mbox{otherwise}.
         \end{cases}
\end{equation*}
Therefore,
$$[c,d]_n=\det[a_{ij}]_{0\ls i,j\ls n-1}=\det [b_{ij}]_{0\ls i,j\ls n-1}\eq0\pmod{\varphi(n)^2}.$$
This concludes our proof. \qed

\section{Proof of Theorem \ref{shift}}
\setcounter{lemma}{0}
\setcounter{theorem}{0}
\setcounter{equation}{0}
\setcounter{conjecture}{0}
\setcounter{remark}{0}
\setcounter{corollary}{0}

We need the following well-known Weinstein-Aronszajn identity (cf. \cite{WAI}).

\begin{lemma}\label{WA}
	Suppose that $A$ and $B$ are matrices over the complex field of sizes $l\times m$ and $m\times l$, respectively. Then
	$$
	\la^m\det(\lambda I_l-AB)=\lambda^{l}\det(\lambda I_m-BA),
	$$
where $I_n$ denotes the identity matrix of order $n$.
\end{lemma}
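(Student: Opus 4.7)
The plan is to evaluate the determinant of a single auxiliary block matrix in two different ways via the Schur-complement formula; this is the standard and cleanest route to the Weinstein-Aronszajn identity. Form the $(l+m)\times(l+m)$ matrix
$$M(\lambda)=\begin{pmatrix}\lambda I_l & A\\ B & I_m\end{pmatrix}$$
and compute $\det M(\lambda)$ using Schur complements centered at each of the two diagonal blocks.

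First, the lower-right block $I_m$ is invertible for every $\lambda$, so the Schur-complement formula based there gives
$$\det M(\lambda)=\det(I_m)\cdot\det(\lambda I_l - A\,I_m^{-1} B)=\det(\lambda I_l - AB).$$
Next, assuming $\lambda\neq 0$ so that the upper-left block $\lambda I_l$ is invertible, the Schur-complement formula based at that block gives
$$\det M(\lambda)=\det(\lambda I_l)\cdot\det(I_m - B(\lambda I_l)^{-1}A)=\lambda^l\det(I_m - \lambda^{-1}BA)=\lambda^{l-m}\det(\lambda I_m - BA).$$
Equating the two expressions and multiplying through by $\lambda^m$ produces the identity $\lambda^m\det(\lambda I_l-AB)=\lambda^l\det(\lambda I_m-BA)$ for every $\lambda\neq 0$.

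The only step requiring any care, and the principal (rather modest) obstacle, is promoting the identity from $\lambda\neq 0$ to all $\lambda\in\mathbb{C}$. This is immediate: both sides are polynomials in $\lambda$ whose coefficients depend only on the fixed matrices $A$ and $B$, and two polynomials that agree on an infinite set of values must coincide. Hence the identity extends automatically to $\lambda=0$, completing the proof. A minor variant that bypasses even this subtlety is to work over the rational function field $\mathbb{C}(\lambda)$, where $\lambda I_l$ is always invertible and the Schur-complement calculation goes through directly.
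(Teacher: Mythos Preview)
Your argument is correct and is the standard Schur-complement derivation of the Weinstein--Aronszajn identity. The paper does not give its own proof of this lemma; it simply quotes the result as well known and supplies a reference, so there is no approach to compare against --- your proof would serve as a self-contained justification.
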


\medskip

\noindent{\it Proof of Theorem \ref{shift}.} We set $A=[i^j]_{1\ls i\ls p-1\atop 0\ls j\ls n}$ and $C=[c_{i,j}]_{0\ls i,j\ls n}$ with
\begin{equation*}
  c_{i,j}=\begin{cases}
           x &\mbox{if $i=0$ and $j=0$, }  \\
           a_i & \mbox{if $i+j=n$,}  \\
           0 &\mbox{otherwise.}
         \end{cases}
\end{equation*}
By Lemma \ref{WA},
\begin{equation}\label{WArelation1}
\begin{aligned}
&\det(\lambda I_{p-1}-[x+H(i,j)]_{1\ls i,j\ls p-1})\\
=&\det(\lambda I_{p-1}-ACA^{T})\\=&\lambda^{p-n-2}\det(\lambda I_{n+1}-CA^{T}A)\\=&\lambda^{p-n-2}\det(\lambda I_{n+1}-C[s_{i+j}]_{0\ls i,j\ls n}),
\end{aligned}
\end{equation}
where $s_k=\sum_{i=0}^{p-1}i^k$.  According to \cite[p.\,235]{IR},
\begin{equation}\label{s1}
  \sum_{i=0}^{p-1}i^k\equiv\begin{cases}
           -1\pmod{p} & \mbox{if $p-1\nmid k$},  \\
           0\pmod{p} & \mbox{if $p-1\mid k$.}
         \end{cases}
\end{equation}
So,  when $n=p-2$ we have
\begin{align*}
\det[x+H(i,j)]_{1\ls i,j\ls p-1}&\equiv\det C\times \det[s_{i+j}]_{0\ls i,j\ls n}
\\&\eq(-1)^{(p-1)/2}\prod_{k=0}^n a_k\times(-1)^{(p-3)/2}
=-\prod_{k=0}^na_k\pmod{p}.
\end{align*}

Let $D=[d_{ij}]_{0\ls i,j\ls n}$ be the matrix $C[s_{i+j}]_{0\ls i,j\ls n}$.
\medskip

{\it Case} 1. $3(p-1)/2\ls n< 2p-2$.

In this case, we have
\begin{equation*}
  d_{ij}\equiv\begin{cases}
           -x\pmod{p} &\mbox{if $i=0$ and $j\in\{0,p-1\}$,}  \\
           -a_{0}\pmod{p} &\mbox{if $i=0$ and $j+n\in\{2p-2,3p-3\}$,}  \\
           -a_{i}\pmod{p} &\mbox{if $i\gs1$ and $i-j\equiv n\pmod{p-1}$,} \\
           0\pmod{p} &\mbox{otherwise.}
         \end{cases}
\end{equation*}
Hence $\lambda I_{n+1}-D$ is congruent to the matrix
$$
\addtocounter{MaxMatrixCols}{20}
\begin{bmatrix}
\lambda+x      &         &a_{0}   &        &      &      &x              &      &a_{0}   &      &  \\
               & \lambda &        &\ddots  &      &      &               &      &        &\ddots&  \\
               &         &\ddots  &        &\ddots&      &               &      &        &      &a_{2n-3p+3}\\
               &         &        &\ddots  &      &\ddots&               &      &        &      &  \\
a_{n-p+1}      &         &        &        &\ddots&      &\ddots         &      &        &      &  \\
               &\ddots   &        &        &      &\ddots&               &\ddots&        &      &  \\
               &         &\ddots  &        &      &      &\ddots         &      &\ddots  &      &  \\
               &         &        &\ddots  &      &      &               &\ddots&        &\ddots&  \\
               &         &        &        &\ddots&      &               &      &\ddots  &      &a_{2n-2p+2}\\
               &         &        &        &      &\ddots&               &      &        &\ddots&  \\
a_{n}          &         &        &        &      &      &a_{n}          &      &        &      &\lambda\\
\end{bmatrix}.
$$
 (whose entries $0$ are not indicated) modulo $p$.
Subtracting the $k$-column from the $(k+p-1)$-column for $0\ls k\ls n-p+1$, we find that the last matrix is transformed to the matrix
$$
\addtocounter{MaxMatrixCols}{20}
\begin{bmatrix}
\lambda+x      &         &a_{0}   &        &      &      &-\lambda&        &        &       &  \\
               & \lambda &        &\ddots  &      &      &        &\ddots  &        &       &  \\
               &         &\ddots  &        &\ddots&      &        &        &\ddots  &       &  \\
               &         &        &\ddots  &      &a_{n-p}&       &        &        &\ddots &  \\
a_{n-p+1}      &         &        &        &\ddots&      &0       &        &        &       &-\lambda  \\
               &\ddots   &        &        &      &\ddots&        &        &        &       &  \\
               &         &\ddots  &        &      &      &\ddots  &        &        &       &  \\
               &         &        &\ddots  &      &      &        &\ddots  &        &       &  \\
               &         &        &        &\ddots&      &        &        &\ddots  &       &  \\
               &         &        &        &      &a_{n-1}&       &        &        &\ddots &  \\
a_{n}          &         &        &        &      &      &0       &        &        &       &\lambda\\
\end{bmatrix}.
$$
Adding the $k$-row to the $(k-p+1)$-row for $p-1\ls k\ls n$, we see that the last matrix is transformed to
$$
\addtocounter{MaxMatrixCols}{20}
\begin{bmatrix}
\lambda+x               &       &a_{0}+a_{p-1}           &      &                      &      &        &      &   \\
                        &\lambda&                        &\ddots&                      &      &        &      &   \\
                        &       &\ddots                  &      &a_{n-p}+a_{n-1}       &      &        &      &    \\
a_{n-p+1}+a_{n}         &       &                        &\ddots&                      &0     &        &      &  \\
                        &\ddots &                        &      &\ddots                &      &        &      & \\
                        &       &\ddots                  &      &                      &\ddots&        &      & \\
                        &       &                        &\ddots&                      &      &\ddots  &      & \\
                        &       &                        &      &a_{n-1}               &      &        &\ddots&\\
a_{n}                   &       &                        &      &                      &0     &        &      &\lambda \\
\end{bmatrix}.
$$
Thus, by \eqref{WArelation1}, $\det(\lambda I_{p-1}-[x+H(i,j)]_{1\ls i,j\ls p-1})$ is congruent to
\begin{align*}
\det
\begin{bmatrix}
\lambda+x               &       &               &a_{0}+a_{p-1}&      & \\
                        &\lambda&               &                        &\ddots& \\
                        &       &\ddots         &                        &      &a_{n-p}+a_{n-1}\\
a_{n-p+1}+a_{n}         &       &               &\ddots                  &      & \\
                        &\ddots &               &                        &\ddots&      \\
                        &       &a_{p-2}&                        &      &\lambda\\
\end{bmatrix}
\end{align*}
modulo $p$. Taking $\lambda=0$ we obtain that
\begin{align*}
  \det[x+H(i,j)]_{1\ls i,j\ls p-1}
  \equiv(-1)^n\prod_{k=0}^{n-p+1}(a_{k}+a_{k+p-1})\times \prod_{n-p+1<k<p-1}a_{k}\pmod{p}.
\end{align*}

{\it Case} 2. $p-1< n< 3(p-1)/2$.

In this case, we have
\begin{equation*}
  d_{ij}\equiv\begin{cases}
           -x\pmod{p} &\mbox{if $i=0$ and $j\in\{0,p-1\}$,}  \\
           -a_{0}\pmod{p} &\mbox{if $i=0$ and $j=2p-2-n$,}  \\
           -a_{i}\pmod{p} &\mbox{if $i\gs1$ and $i-j\equiv n\pmod{p-1}$,} \\
           0\pmod{p} &\mbox{otherwise.}
         \end{cases}
\end{equation*}
Hence $\lambda I_{n+1}-D$ is congruent to the matrix
$$
\addtocounter{MaxMatrixCols}{20}
\begin{bmatrix}
\lambda+x&         &a_{0}   &      &      &x       &      &      & \\
         & \lambda &        &\ddots&      &        &      &      & \\
         &         &\ddots  &      &\ddots&        &      &      & \\
a_{n-p+1}&         &        &\ddots&      &\ddots  &      &      & \\
         &\ddots   &        &      &\ddots&        &\ddots&      & \\
         &         &\ddots  &      &      &\ddots  &      &\ddots& \\
         &         &        &\ddots&      &        &\ddots&      &a_{2n-2p+2}\\
         &         &        &      &\ddots&        &      &\ddots& \\
a_{n}    &         &        &      &      &a_{n}&      &      &\lambda\\
\end{bmatrix}.
$$
modulo $p$. Via some arguments similar to the discussion in Case 1, we obtain that
\begin{align*}
  \det[x+H(i,j)]_{1\ls i,j\ls p-1}
  \equiv(-1)^n\prod_{k=0}^{n-p+1}(a_{k}+a_{k+p-1})\times \prod_{k=n-p+2}^{p-2}a_{k}\pmod{p}.
\end{align*}

{\it Case} 3. $n=p-1$.

In this case, we have
\begin{equation*}
  d_{ij}\equiv\begin{cases}
           -x-a_0\pmod{p} &\mbox{if}\ i=0\ \text{and}\ j\in\{0,p-1\},  \\
           -a_{i}\pmod{p} &\mbox{if}\ i\gs1\ \t{and}\ i-j\equiv 0\pmod{p-1}, \\
           0\pmod{p} &\mbox{otherwise.}
         \end{cases}
\end{equation*}
In light of \eqref{WArelation1}, we have
$$
\det(\lambda I_{p-1}-[x+H(i,j)]_{1\ls i,j\ls p-1})\equiv (\lambda+x+a_0+a_{p-1})\prod_{k=1}^{p-2}(\lambda+a_k)\pmod{p}.
$$
Taking $\lambda=0$, we immediately obtain the desired result.
\medskip

In view of the above, we have completed our proof of Theorem \ref{shift}. \qed

\section{Proof of Theorem \ref{formula}}
\setcounter{lemma}{0}
\setcounter{theorem}{0}
\setcounter{equation}{0}
\setcounter{conjecture}{0}
\setcounter{remark}{0}
\setcounter{corollary}{0}
We shall use the following useful lemma (cf. \cite{Vr}).

\begin{lemma}[The Matrix-Determinant Lemma]\label{matrix-determinant-lemma}
Let $H$ be an $n\times n$ matrix over the complex field,
and let ${\bf u}$ and ${\bf v}$ be two $n$-dimensional column vectors
whose components are complex numbers. Then
$$
\det(H+{\bf uv}^T)=\det H+{\bf v}^T{\rm adj}(H){\bf u},
$$
where ${\rm adj}(H)$ is the adjugate matrix of $H$.
\end{lemma}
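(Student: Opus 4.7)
The plan is to prove the identity purely by multilinearity of the determinant in the columns, which avoids any invertibility hypothesis on $H$ and requires no limiting argument. Let $H_1,\ldots,H_n$ denote the columns of $H$, and let $v_1,\ldots,v_n$ denote the entries of ${\bf v}$. The $j$-th column of $H+{\bf uv}^T$ is then $H_j+v_j{\bf u}$, so I would begin by applying multilinearity column by column to write
$$
\det(H+{\bf uv}^T)=\sum_{S\se\{1,\ldots,n\}}\det\bigl(M_1^S,\ldots,M_n^S\bigr),
$$
where $M_j^S=v_j{\bf u}$ if $j\in S$ and $M_j^S=H_j$ otherwise.

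Next I would discard the degenerate terms. If $|S|\gs2$, then the matrix $(M_1^S,\ldots,M_n^S)$ has at least two columns proportional to ${\bf u}$, so its determinant vanishes. The only surviving contributions come from $S=\emptyset$, which yields $\det H$, and from the singletons $S=\{j\}$ for $j=1,\ldots,n$. Writing $H^{(j)}$ for the matrix obtained from $H$ by replacing its $j$-th column with ${\bf u}$, this step produces the intermediate identity
$$
\det(H+{\bf uv}^T)=\det H+\sum_{j=1}^{n}v_j\det H^{(j)}.
$$

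Finally, I would expand each $\det H^{(j)}$ along its $j$-th column via cofactors: writing $C_{ij}(H)=(-1)^{i+j}\det H_{\widehat{i}\widehat{j}}$ for the $(i,j)$-cofactor of $H$, we have $\det H^{(j)}=\sum_{i=1}^{n}u_i C_{ij}(H)$, and by the definition of the adjugate $[\mathrm{adj}(H)]_{ji}=C_{ij}(H)$. Substituting back gives
$$
\sum_{j=1}^{n}v_j\det H^{(j)}=\sum_{j=1}^{n}\sum_{i=1}^{n}v_j[\mathrm{adj}(H)]_{ji}u_i={\bf v}^T\mathrm{adj}(H){\bf u},
$$
which completes the proof. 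The step most prone to error is the bookkeeping with the adjugate convention, since $\mathrm{adj}(H)$ is the transpose of the cofactor matrix; carrying the transpose through the cofactor expansion is what produces ${\bf v}^T\mathrm{adj}(H){\bf u}$ rather than ${\bf u}^T\mathrm{adj}(H){\bf v}$, and this is the only place where care is genuinely required.
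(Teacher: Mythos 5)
Your argument is correct. Note first that the paper does not actually prove this lemma: it is quoted as a known result with only a citation to Vrabel's note, so there is no internal proof to compare against. Your column-multilinearity expansion, the observation that any term with two or more columns proportional to $\mathbf{u}$ vanishes, the reduction to $\det H+\sum_{j}v_j\det H^{(j)}$, and the cofactor expansion $\det H^{(j)}=\sum_i u_i C_{ij}(H)$ with $[\mathrm{adj}(H)]_{ji}=C_{ij}(H)$ are all sound, and you correctly flag the one delicate point, namely that the adjugate is the transpose of the cofactor matrix, which is exactly what forces the answer into the form $\mathbf{v}^T\mathrm{adj}(H)\mathbf{u}$. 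A virtue of your route worth noting: the most commonly seen proof first assumes $H$ invertible, writes $\det(H+\mathbf{u}\mathbf{v}^T)=\det(H)\bigl(1+\mathbf{v}^TH^{-1}\mathbf{u}\bigr)$ via $\det(I+H^{-1}\mathbf{u}\mathbf{v}^T)=1+\mathbf{v}^TH^{-1}\mathbf{u}$, and then removes the invertibility hypothesis by a density or polynomial-identity argument; your proof needs no such two-step detour and holds verbatim over any commutative ring, which is in the spirit of how the lemma is applied in Section 4 (to matrices with entries in $\Z/p\Z$, where some of the diagonal entries $a_k$ may well vanish).
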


\noindent{\it Proof of Theorem \ref{formula}.} We set $A=[i^j]_{2\ls i\ls p-2 \atop 0\ls j\ls p-2}$ and $C=[c_{ij}]_{0\ls i,j\ls p-2}$ with
\begin{align*}
c_{ij}=\begin{cases}a_i &\mbox{if $i+j=p-2$,}\\0 &\mbox{if $i+j\neq p-2$.}\end{cases}
\end{align*}
We also define $s_k:=\sum_{i=2}^{p-2}i^k$ for $k=0,1,2,\ldots$. In view of \eqref{s1},
\begin{align}\label{s2}
s_k\equiv\begin{cases}-3\pmod{p}&\mbox{if $p-1\mid k$,}\\-2\pmod{p}&\mbox{if $2\mid k$ and $p-1\nmid k$,}\\0\pmod{p}&\mbox{if $2\nmid k$}.\end{cases}
\end{align}

By Wilson's theorem, we have
\begin{align*}
\det[P(ij^{-1})]_{1<i,j<p-1}\equiv\det[P(ij^{-1})j^{p-2}]_{1<i,j<p-1}\equiv\det(ACA^T)\pmod{p}.
\end{align*}
Hence it suffices to focus on the matrix $ACA^T$ from now on. Applying Lemma \ref{WA} and \eqref{s2}, we obtain
\begin{equation}\label{WArelation2}
\begin{aligned}
&\ \det(\lambda I_{p-3}-ACA^T)\\
=&\ \lambda^{-2}\det(\lambda I_{p-1}-CAA^T)\\
=&\ \lambda^{-2}\det(\lambda I_{p-1}-C[s_{i+j}]_{0\ls i,j\ls p-2})\\
\equiv&\ \lambda^{-2}\det(\lambda I_{p-1}-[d_{ij}]_{0\ls i,j\ls p-2})\pmod{p},
\end{aligned}
\end{equation}
where
\begin{align*}
d_{ij}=\begin{cases}-3a_i&\mbox{if $p-1\mid j-i-1$,}\\-2a_i&\mbox{if $2\mid j-i-1$ and $p-1\nmid j-i-1$,}\\0&\mbox{if $2\nmid j-i-1$.}\end{cases}
\end{align*}
Subtracting the $0$-column from the $2k$-column, and subtracting the $1$-column from the $(2k+1)$-column for $1\ls k\ls (p-3)/2$, we find that the matrix $\lambda I_{p-1}-[d_{ij}]_{0\ls i,j\ls p-2}$ is converted to
$$
\begin{bmatrix}
\lambda      &3a_0    &-\lambda&-a_0    &\cdots  &-\lambda&-a_0   \\
2a_1         &\lambda & a_1    &-\lambda&\cdots  &0       &-\lambda\\
0            &2a_2    &\lambda & a_2    &        &        &       \\
\vdots       &\vdots  &        &\ddots  &\ddots  &        &       \\
\vdots       &\vdots  &        &        &\ddots  &\ddots  &       \\
0            &2a_{p-3}&        &        &        &\lambda & a_{p-3} \\
3a_{p-2}     &0       &-a_{p-2}&0       &\cdots  &-a_{p-2}&\lambda\\
\end{bmatrix}.
$$
Subtracting the $2k$-column times $2$ from the $0$-column, and subtracting the $(2k+1)$-column times $2$ from the $1$-column for $1\ls k\ls (p-3)/2$, we see that the last matrix is transformed to
$$
\begin{bmatrix}
(p-2)\lambda &pa_0        &-\lambda&-a_0    &\cdots  &-\lambda&-a_0   \\
0            &(p-2)\lambda& a_1    &-\lambda&\cdots  &0       &-\lambda\\
(p-2)\lambda &0           &\lambda & a_2    &        &        &       \\
\vdots       &\vdots      &        &\ddots  &\ddots  &        &       \\
\vdots       &\vdots      &        &        &\ddots  &\ddots  &       \\
(p-2)\lambda &0           &        &        &        &\lambda & a_{p-3} \\
pa_{p-2}     &(p-2)\lambda&-a_{p-2}&0       &\cdots  &-a_{p-2}&\lambda\\
\end{bmatrix}.
$$
It follows from \eqref{WArelation2} that
\begin{align*}
\det(ACA^T)&\equiv4\det
\begin{bmatrix}
1     &      &        &-a_0&\cdots&        &-a_0   \\
      &1     &a_1     &    &      &        & \\
1     &      &        &a_2 &      &        &       \\
\vdots&\vdots&        &    &\ddots&        &       \\
\vdots&\vdots&        &    &      &\ddots  &       \\
1     &      &        &    &      &        & a_{p-3} \\
      &1     &-a_{p-2}&    &\cdots&-a_{p-2}& \\
\end{bmatrix}\\
&=4\det
\begin{bmatrix}
1     &-a_0  &\cdots&-a_0   &        &      &        &  \\
1     &a_2   &      &       &        &      &        &\\
\vdots&      &\ddots&       &        &      &        & \\
1     &      &      &a_{p-3}&        &      &        & \\
      &      &      &       &a_1     &      &        &1\\
      &      &      &       &        &\ddots&        &\vdots \\
      &      &      &       &        &      & a_{p-3}&1\\
      &      &      &       &-a_{p-2}&\cdots&-a_{p-2}&1 \\
\end{bmatrix}\pmod p.
\end{align*}
Let ${\bf 1}$ denote the $(p-3)/2$-dimensional column vector whose entries are all 1. By Lemma \ref{matrix-determinant-lemma},
\begin{align*}
&\ \det(ACA^T)\\
\equiv&\ 4\det
\begin{bmatrix}
1                  &                                                                    &   &        \\
{\bf1}             &\mathrm{diag}(a_2,\cdots,a_{p-3})+a_0{\bf1}{\bf1}^T&   &    \\
                   &                                                                    &\mathrm{diag}(a_1,\cdots,a_{p-4})+a_{p-2}{\bf1}{\bf1}^T&{\bf1}   \\
                  &                                                                        &   &1        \\
\end{bmatrix}\\
\equiv&\ 4\det(\mathrm{diag}(a_2,\cdots,a_{p-3})+a_0{\bf1}{\bf1}^T) \det(\mathrm{diag}(a_1,\cdots,a_{p-4})+a_{p-2}{\bf1}{\bf1}^T)\\
\equiv&\ 4(\hat{a}_0+{\bf1}^T \mathrm{diag}(\hat{a}_2,\cdots,\hat{a}_{p-3}){\bf1})(\hat{a}_{p-2}+{\bf1}^T \mathrm{diag}(\hat{a}_1,\cdots,\hat{a}_{p-4}){\bf1})\\
\equiv&\ 4\sum_{i=0}^{(p-3)/2}\hat{a}_{2i}\times\sum_{i=0}^{(p-3)/2}\hat{a}_{2i+1}\pmod{p}.
\end{align*}
This concludes our proof of Theorem \ref{formula}. \qed

\section{Deduce Corollary \ref{Dp^-(1,1)} from Theorem \ref{formula}}
\setcounter{lemma}{0}
\setcounter{theorem}{0}
\setcounter{equation}{0}
\setcounter{conjecture}{0}
\setcounter{remark}{0}
\setcounter{corollary}{0}

\noindent{\it Proof of Theorem \ref{formula}.} By Fermat's little theorem, there exists a polynomial
$$
P(T)=\sum_{k=0}^{p-2}a_kT^k\in\Z_p[T]
$$
such that
$$
(T^2+T+1)^{p-2}\equiv P(T)\pmod{p}
$$
for any $T\in\{1,2,\ldots,p-1\}$. When $p\equiv1\pmod{3}$, by \cite[Corollary 2.1]{LuoSun} we may take
\begin{align}\label{ak1}
a_k=\begin{cases}k+{5}/{3}&\mbox{if $k\equiv0\pmod{3}$,}\\
                 -k-{4}/{3}&\mbox{if $k\equiv1\pmod{3}$,}\\
                 -{1}/{3}&\mbox{if $k\equiv2\pmod{3}$.}\end{cases}
\end{align}
When $p\equiv2\pmod{3}$, by \cite[Lemma 2.1]{WuSheNi} we may take
\begin{align}\label{ak2}
a_k=\begin{cases}{1}/{3}&\mbox{if $k\equiv0,2\pmod{3}$,}\\-{2}/{3}&\mbox{if $k\equiv1\pmod{3}$.}\end{cases}
\end{align}

{\it Case} 1. $p\equiv2\pmod{3}$.

Combine Theorem \ref{formula} with \eqref{ak2}, we obtain that
\begin{align*}
D_p^-(1,1)&\equiv\det[P(ij^{-1})]_{1<i,j<p-1}\\
&\equiv4\prod_{k=0}^{p-2}a_k\times\sum_{k=0}^{(p-3)/{2}}\f{1}{a_{2k}}\times\sum_{k=0}^{(p-3)/{2}}\f{1}{a_{2k+1}}\\
&\equiv2^{(p-8)/3}3^4\pmod{p}.
\end{align*}

{\it Case} 2. $p\equiv7\pmod{9}$.

 Note that $(p-4)/3, (2p-5)/3\in\{0,1,\ldots,p-2\}$. Since $(p-4)/3\equiv1\pmod{3}$, by \eqref{ak1} we have $a_{(p-4)/{3}}=-p/3\equiv0\pmod{p}$. Similarly, $a_{(2p-5)/3}=2p/3\equiv0\pmod{p}$ since $(2p-5)/3\equiv0\pmod{3}$. Furthermore, both $(p-4)/3$ and $(2p-5)/3$ are odd and hence $\hat{a}_k\equiv0\pmod{p}$ when $2\nmid k$. It follows from Theorem \ref{formula} that $D_p^-(1,1)\equiv0\pmod{p}$.

{\it Case} 3. $p\equiv1,4\pmod{9}$.

Suppose that $a_k\equiv0\pmod{p}$ for some $k\in\{0,\ldots,p-2\}$. Then $k\equiv0,1\pmod{3}$.
 If $k\equiv0\pmod{3}$, then $p\mid 3k+5$ and $0\ls k\ls p-4$, hence $3k+5=p$ or $3k+5=2p$, which implies that $p\equiv5,7\not\eq1,4\pmod{9}$.
 If $k\equiv1\pmod{3}$, then $p\mid 3k+4$ and $1\ls k\ls p-3$, hence $3k+4=p$ or $3k+4=2p$, which implies that $p\equiv7,8\not\eq1,4\pmod{9}$.

By the last paragraph, $a_k\not\equiv0\pmod{p}$ for all $k\in\{0,\ldots,p-2\}$.
 It is easy to verify that $a_k\equiv a_{p-3-k}\pmod{p}$ for all $k=0,\ldots,p-3$. Hence we may derive from Theorem \ref{formula} and \eqref{ak1} that
\begin{align*}
\left(\f{D_p^-(1,1)}{p}\right)=\left(\f{a_{(p-3)/{2}}a_{p-2}\times\sum_{k=0}^{(p-3)/{2}}\f{1}{a_{2k}}\times
\sum_{k=0}^{(p-3)/{2}}\f{1}{a_{2k+1}}}{p}\right)=\left(\f{3\Sigma_13\Sigma_2}{p}\right)=\left(\f{\Sigma_1\Sigma_2}{p}\right).
\end{align*}
This completes the proof of Corollary \ref{Dp^-(1,1)}. \qed
\medskip

\end{document}